\newcommand{\al}{\alpha}
\newcommand{\fr}{\mathcal{F}}
\newcommand{\ity}{\infty}
\newcommand{\C}{\mathbb{C}}
\newcommand{\N}{\mathbb{N}}
\numberwithin{equation}{section}
\newtheorem{theorem}{Theorem}[section]
\newtheorem{lemma}[theorem]{Lemma}
\newtheorem{corollary}[theorem]{Corollary}
\newtheorem*{thma}{Theorem A}
\newtheorem*{thmb}{Theorem B}
\newtheorem*{thmc}{Theorem C}
\newtheorem*{thme}{Theorem D}
\newtheorem*{thme'}{Theorem D$'$}
\newtheorem*{zl}{Zalcman's Lemma}
\theoremstyle{remark}
\newtheorem{remark}[theorem]{Remark}
\newtheorem{example}[theorem]{Example}
\newtheorem*{obs}{Observation}
\newtheorem*{exam}{Example}
\begin{document}

\title[Normality and Montel's Theorem ]{Normality and Montel's Theorem}

\thanks{The research work  of the first author is supported by research fellowship from UGC India.}

\author[G. Datt]{Gopal Datt}
\address{Department of Mathematics, University of Delhi,
Delhi--110 007, India} \email{ggopal.datt@gmail.com \\ gdatt1@maths.du.ac.in}

\author[S. Kumar]{Sanjay Kumar}

\address{Department of Mathematics, Deen Dayal Upadhyaya College, University of Delhi,
Delhi--110 015, India }
\email{sanjpant@gmail.com}

\begin{abstract}
In this article, we prove a normality criterion for a family of meromorphic functions having zeros with some multiplicity   which involves sharing of a holomorphic function  by the  members of the family. Our result generalizes Montel's normality test in a certain sense.
\end{abstract}
\keywords{meromorphic functions, holomorphic functions, shared functions, normal families.}
\subjclass[2010]{30D45}
 \maketitle
\section{Introduction and main results}
The notion  of normal families has played a key role in the progress of function theory.  The convergence of a family of functions always has far reaching consequences.  The concept of local convergence of a sequence of functions was introduced by Montel who later gave the notion of normal family.  He gave a result on the convergence of the sequence of  holomorphic functions which says that a sequence of uniformly bounded  holomorphic functions has  a subsequence that is locally uniformly convergent. Let us recall the definition: {\it A family of meromorphic (holomorphic) functions defined on a domain $D\subset \C$ is said  to be \emph{normal} in the domain, if every sequence in the family  has a subsequence which converges spherically  uniformly on compact subsets of  $D$ to a meromorphic (holomorphic) function or to $\ity$.} \cite{Ahl, Hay, Schiff, Yang}.\\

The most celebrated result in the theory of normal families is  Montel's Crit$\grave{\text{e}}$re Fondamental (Fundamental Normality Test), which says that: {\it A family $\mathcal F$ of meromorphic functions in a domain $D\subset \C$, which  omits three distinct complex numbers, is normal in the domain $D$.} This result supports  Bloch's heuristic  principle which says that a family $\fr$ of meromorphic functions endowed with a property $P$ is normal if  condition $P$ reduces a meromorphic function to a constant in the plane. Although this principle is not true in general, many researchers gave normality criteria for families of meromorphic functions supporting Bloch's heuristic principle.  Inspired by Bloch's principle, Schwick  discovered a connection between shared values and normality \cite{Sch 92}. Since then many researchers proved normality criteria concerning shared values \cite{GDSK,GDSK1,Sun 94,Xu 10}. Let us recall the meaning of shared values.
Let $f$ be a meromorphic function of a domain $D\subset\C$. For $p\in \C$, let
\begin{equation*}
  E_f(p)=\{z\in D: f(z)=p\}
\end{equation*}
and let
\begin{equation*}
E_f(\infty)= \text{poles of\ } f \text{\ in\ } D.
\end{equation*}
 For $p\in \C\cup \{\infty\}$, two meromorphic functions $f$ and $g$ of $D$ share the value $p$ if $E_f(p)=E_g(p).$

Improving the fundamental normality test Sun \cite{Sun 94} proved the following theorem.
\begin{thma}\cite{Sun 94}\label{Sun 94}
  Let $\fr$ be a family of functions meromorphic in a plane domain $D$. If  each pair of functions $f$ and $g$ share $0, 1, \infty$, then $\fr$ is normal in $D$.
\end{thma}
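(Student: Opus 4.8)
The natural approach is to combine the rigidity forced by the sharing hypothesis with Zalcman's rescaling lemma, arguing locally and by contradiction. First I would fix one member $f_0 \in \fr$ and record the key consequence of the hypothesis: since every $f \in \fr$ shares $0, 1, \ity$ with $f_0$, the zero set $S_0 = E_{f_0}(0)$, the set of $1$-points $S_1 = E_{f_0}(1)$, and the pole set $S_\ity = E_{f_0}(\ity)$ are the \emph{same} for every member of the family. If some member is constant the conclusion is immediate (either all members coincide with that constant, or each member omits all of $0,1,\ity$ and Montel's test applies), so I may assume $f_0$ is non-constant, whence $S_0, S_1, S_\ity$ are pairwise disjoint discrete subsets of $D$. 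At a point $z_0 \notin S_0 \cup S_1 \cup S_\ity$ a small disk about $z_0$ meets none of these sets, so every member omits $0, 1, \ity$ there and normality at $z_0$ follows from Montel's fundamental normality test. It remains to treat $z_0 \in S_0 \cup S_1 \cup S_\ity$, and by applying a M\"obius transformation permuting $\{0,1,\ity\}$ (the maps $w \mapsto 1/w$ and $w \mapsto 1-w$, which preserve the sharing hypothesis) I may assume $z_0 \in S_0$.

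Fix a disk $\Delta$ about $z_0$ so small that $S_0 \cap \Delta = \{z_0\}$ and $(S_1 \cup S_\ity)\cap \Delta = \emptyset$. Then every $f \in \fr$ omits both $1$ and $\ity$ on $\Delta$ and vanishes in $\Delta$ only at $z_0$. Suppose $\fr$ is not normal at $z_0$. By Zalcman's Lemma there are $f_n \in \fr$, points $z_n \to z_0$, and scales $\rho_n \downarrow 0$ such that $g_n(\zeta) := f_n(z_n + \rho_n \zeta)$ converges locally uniformly on $\C$, in the spherical metric, to a non-constant meromorphic function $g$. Since each $g_n$ omits $1$ and $\ity$ on every fixed compact set once $n$ is large, Hurwitz's theorem forces $g$ to omit $1$ and $\ity$ as well; in particular $g$ is a non-constant entire function omitting the value $1$.

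Next I would locate the zeros of $g$. Because the only zero of $f_n$ in $\Delta$ is at $z_0$, the only possible zero of $g_n$ on a fixed compact set is $\zeta_n := (z_0 - z_n)/\rho_n$, so by Hurwitz every zero of $g$ must equal $\lim \zeta_n$ along the relevant subsequence. If $\zeta_n \to \ity$ along a subsequence, then $g$ has no zeros, so $g$ omits $0, 1, \ity$, contradicting Picard's theorem. Otherwise $\zeta_n \to \zeta^* \in \C$ along a subsequence and all zeros of $g$ lie at the single point $\zeta^*$. Writing the nowhere-vanishing entire function $g - 1$ as $e^{\phi}$ for an entire $\phi$, the zeros of $g = 1 + e^{\phi}$ are exactly the solutions of $\phi(\zeta) \in \{(2k+1)\pi i : k \in \Z\}$; confining this solution set to the one point $\zeta^*$ forces $\phi$ to omit infinitely many of the values $(2k+1)\pi i$, so $\phi$ is constant by Picard's theorem and hence $g$ is constant, a contradiction. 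Thus $\fr$ is normal at $z_0$, completing the proof.

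I expect the decisive step to be the finite-limit case $\zeta_n \to \zeta^*$: ruling out a non-constant entire Zalcman limit whose zeros are artificially concentrated at one point while the value $1$ is omitted is exactly where the sharing hypothesis must be converted into a genuine Picard-type obstruction, and the representation $g = 1 + e^{\phi}$ together with the little Picard theorem is the cleanest way I see to close it.
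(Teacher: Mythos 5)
Your proof is correct, but note that the paper itself gives no proof of Theorem A: it is quoted from Sun's paper as a known background result, so the comparison can only be with the paper's general method for its own theorems. Your opening follows exactly the paradigm the paper uses for Theorem 1.1 --- localize at a point of the shared set, apply Zalcman's Lemma, and use Hurwitz to transfer omitted values and zero locations to the limit --- and you handle the delicate points properly: the constant-member case, the disjointness and discreteness of $S_0, S_1, S_\ity$ for non-constant $f_0$, the M\"obius reduction (post-composition with $w\mapsto 1/w$ and $w\mapsto 1-w$ preserves both the sharing hypothesis and normality, the latter by uniform continuity of a fixed M\"obius map on the sphere), and the Bolzano--Weierstrass dichotomy for $\zeta_n=(z_0-z_n)/\rho_j$. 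Where you genuinely diverge is the endgame. The paper's scheme (in the proof of Theorem 1.1) first shows via value-distribution lemmas (Lemmas 2.1 and 2.2, which rest on Nevanlinna's second fundamental theorem and a rational-function analysis) that the Zalcman limit assumes the relevant value at two \emph{distinct} points $w_1\neq w_2$, and only then invokes sharing together with the discreteness of $\{z: f_m(z)=\psi(z)\}$ for one fixed member to force $w_{1_j}=w_{2_j}$, a contradiction; you instead use sharing \emph{first}, to confine all zeros of the limit $g$ to the single point $\zeta^*$ (or to rule them out entirely), and then close with a Picard-type obstruction, writing the zero-free entire function $g-1$ as $e^{\phi}$ and observing that $\phi$ omits all but at most one of the values $(2k+1)\pi i$, so $\phi$ and hence $g$ is constant. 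Your route buys a shorter, fully self-contained argument with no Nevanlinna theory, at the price of using all three shared values and the little Picard theorem; the paper's heavier machinery is precisely what lets it replace three shared values by a single shared function under multiplicity hypotheses, which is the point of Theorems 1.1 and 1.6. One small tightening: in the bounded case you should state explicitly that $g$ may have no zeros at all, which your $g=1+e^{\phi}$ argument still handles, since then $\phi$ omits every value $(2k+1)\pi i$.
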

This result of Sun was further improved by Xu \cite{Xu 10} as follows.
\begin{thmb}\cite{Xu 10}\label{Xu1 10}
  Let $\fr$ be a family of functions meromorphic in a plane domain $D$. Suppose that
  \begin{enumerate}
    \item {$f$ and $g$ share $0$ in $D$, for each pair $f$ and $g$ in $\fr$,}
    \item {all zeros of $f-1$ are of multiplicity at least $3\  (\text{or}\ 2)$, for each $f\in\fr$,}
    \item {all poles of $f$ are of multiplicity at least $2 \ (\text{or}\ 3)$, for each $f\in\fr$,}
  \end{enumerate}
  then $\fr$ is normal in $D$.
\end{thmb}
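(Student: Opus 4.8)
The plan is to argue by contradiction, combining a Zalcman rescaling argument with the Second Main Theorem of Nevanlinna theory. Since normality is a local property, it suffices to show that $\fr$ is normal at each point $z_0\in D$. Discarding the trivial situation in which some member is identically $0$ (which by (1) forces every member to vanish identically), the sharing hypothesis (1) makes every $f\in\fr$ have the \emph{same} zero set $Z=E_f(0)$, which is a closed discrete subset of $D$. This common zero set is what lets the sharing hypothesis interact with the rescaling.

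Suppose $\fr$ fails to be normal at some $z_0\in D$. By Zalcman's Lemma there are functions $f_n\in\fr$, points $z_n\to z_0$ and reals $\rho_n\to 0^+$ such that $g_n(\zeta):=f_n(z_n+\rho_n\zeta)$ converges spherically locally uniformly on $\C$ to a non-constant meromorphic $g$ with $g^{\#}(\zeta)\le g^{\#}(0)=1$; in particular $g$ has finite order, so its error term satisfies $S(r,g)=O(\log r)$. I would first read off the structure of $g$. By Hurwitz's theorem applied to $f_n-1$ and to $1/f_n$, every zero of $g-1$ has multiplicity at least $3$ and every pole of $g$ has multiplicity at least $2$ (respectively $2$ and $3$ in the alternative configuration). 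The sharing hypothesis controls the zeros of $g$: if $g(\zeta_0)=0$, then by Hurwitz $g_n$ has zeros $\zeta_n\to\zeta_0$, so $f_n$ has zeros $w_n:=z_n+\rho_n\zeta_n\to z_0$ with $w_n\in Z$; discreteness of $Z$ then forces $w_n=z_0$ and hence $\zeta_n=(z_0-z_n)/\rho_n$ for large $n$. Since this determines a single value, $g$ can have \emph{at most one zero} in $\C$.

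It then remains to contradict the existence of such a $g$. Assuming $g$ transcendental, I would apply the Second Main Theorem with the targets $0,1,\ity$,
\[
T(r,g)\le \bar N\!\left(r,\tfrac{1}{g}\right)+\bar N\!\left(r,\tfrac{1}{g-1}\right)+\bar N(r,g)+S(r,g).
\]
Because $g$ has at most one zero, $\bar N(r,1/g)=O(\log r)=S(r,g)$; because the zeros of $g-1$ have multiplicity $\ge 3$, the First Main Theorem gives $\bar N(r,1/(g-1))\le\tfrac13 T(r,g)+O(1)$; and because the poles have multiplicity $\ge 2$, $\bar N(r,g)\le\tfrac12 T(r,g)+O(1)$. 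Substituting yields $T(r,g)\le\tfrac56 T(r,g)+S(r,g)$, i.e. $\tfrac16 T(r,g)\le S(r,g)=O(\log r)$, which forces $T(r,g)=O(\log r)$, contradicting transcendence. The same arithmetic $\tfrac13+\tfrac12<1$ drives the alternative multiplicity case. Finally, the rational case is dispatched by a Riemann--Hurwitz count: a non-constant rational $g$ of degree $d$ is surjective, so it attains $0$ at its single zero with full multiplicity $d$ (ramification $d-1$), while the conditions over $1$ and $\ity$ force ramification at least $\tfrac{2d}{3}$ and $\tfrac{d}{2}$; summing gives total ramification at least $(d-1)+\tfrac{2d}{3}+\tfrac{d}{2}=2d-2+\bigl(1+\tfrac{d}{6}\bigr)>2d-2$, which is impossible.

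I expect the main obstacle to be the second step: exploiting the shared-value hypothesis together with the discreteness of $Z$ to cap the number of zeros of the limit $g$ at one, while keeping the Hurwitz multiplicity bookkeeping for $g-1$ and for the poles honest. Once those structural facts about $g$ are in hand, the concluding Nevanlinna estimate is essentially forced by the inequality $\tfrac13+\tfrac12<1$, and the rational exceptional case is routine.
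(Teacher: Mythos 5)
Your architecture is sound and is, in substance, the same as the paper's own machinery: the paper does not reprove Theorem B (it cites Xu), but its proof of Theorem \ref{Main1 function thesis} --- of which Corollary \ref{Cor} is exactly Theorem B up to the substitution $f\mapsto 1-f$ --- runs on precisely your scheme: Zalcman rescaling, Hurwitz bookkeeping for the multiplicities, and the sharing hypothesis plus discreteness of the relevant zero set to collapse distinct zeros of the limit onto the single point $-z_j/\rho_j$. Your zero-capping step is correct (after discarding the trivial case $f\equiv 0$, the set $Z$ is closed and discrete, so $w_n=z_0$ eventually and two distinct zeros of $g$ would force $\zeta_n=(z_0-z_n)/\rho_n=\zeta_n'$), and your transcendental case is fine: with at most one zero, $\overline{N}(r,1/g)=O(\log r)$, and $\tfrac13+\tfrac12<1$ in the second main theorem finishes it. (You do not even need finite order here: $\tfrac16 T(r,g)\le S(r,g)$ outside a set of finite measure already contradicts transcendence; note also that the normalization $g^{\#}\le g^{\#}(0)=1$ comes from the Pang--Zalcman refinement, not the version of Zalcman's Lemma quoted in the paper, though it is standard. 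This corresponds to the paper's Lemma \ref{Main1 function thesis Lemma1}, which reaches the same $\tfrac56$ bound via three small functions.)

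The genuine gap is in your rational case. All hypotheses on the limit $g$ constrain only its behavior on $\C$: ``at most one zero,'' ``zeros of $g-1$ of multiplicity $\ge 3$,'' ``poles of multiplicity $\ge 2$'' say nothing about the point $\infty$ of the sphere, which may be a completely unramified preimage of any one of $0,1,\infty$. So your assertion that $g$ ``attains $0$ at its single zero with full multiplicity $d$'' is false in general --- for instance $g(z)=(z-1)^2/z^3$ has degree $3$, one finite zero of multiplicity $2$, one pole of multiplicity $3$, and a simple zero at $\infty$ --- and likewise each of the ramification bounds $\tfrac{2d}{3}$ and $\tfrac{d}{2}$ over $1$ and $\infty$ can lose the contribution of an unramified point at $\infty$. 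The count is repairable, because $\infty$ lies over at most one of the three values: splitting into the cases $g(\infty)\notin\{0,1,\infty\}$, $g(\infty)=0$, $g(\infty)=1$, $g(\infty)=\infty$, the total ramification is at least $\tfrac{13d}{6}-1$, $\tfrac{13d}{6}-2$, $\tfrac{13d-10}{6}$, $\tfrac{13d-9}{6}$ respectively, each of which exceeds $2d-2$ for all $d\ge 1$, so Riemann--Hurwitz is still violated; but as written your one-line computation does not establish this. For comparison, the paper's rational-case result, Lemma \ref{Main1 function thesis Lemma2}, avoids the issue by an explicit degree count on factorizations of $f^{(m+1)}$, which automatically accounts for the behavior at infinity.
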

In the same paper Xu also proved the following normality criterion.
\begin{thmc}\cite{Xu 10}\label{Xu2 10}
 Let $\fr$ be a family of functions meromorphic in a plane domain $D$ and let $\psi\not\equiv 0, \infty$ be a meromorphic function in $D$. Suppose that
  \begin{enumerate}
    \item {$f$ and $g$ share $0, \infty, \psi(z)$ in $D$, for each pair $f$ and $g$ in $\fr$,}
    \item {the multiplicity of $f\in\fr$ is larger than that of $\psi(z)$ at the common zeros or poles of $f$ and $\psi(z)$ in $D$, }
   \end{enumerate}
  then $\fr$ is normal in $D$.
\end{thmc}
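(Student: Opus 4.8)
The plan is to exploit the fact that normality is a local property, handling most of the domain with Sun's theorem (Theorem A) and reserving a rescaling argument only for the exceptional points. Write $S$ for the (discrete) set of zeros and poles of $\psi$ in $D$; I will treat $D\setminus S$ and $S$ separately.

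On $D\setminus S$ the function $\psi$ is holomorphic and nowhere zero, so the assignment $f\mapsto F:=f/\psi$ produces, for each $f\in\fr$, a well-defined meromorphic function on $D\setminus S$. This is precisely the local M\"obius normalization carrying the three targets $0,\psi,\infty$ to $0,1,\infty$: one has $F=0$ exactly when $f=0$, $F=\infty$ exactly when $f=\infty$, and $F=1$ exactly when $f=\psi$. Consequently condition (1) makes the family $\mathcal{G}:=\{f/\psi:f\in\fr\}$ share $0,1,\infty$ pairwise, and Theorem A applies to give that $\mathcal{G}$ is normal on $D\setminus S$. Since multiplication by the fixed, locally holomorphic and nonvanishing function $\psi$ preserves normality, $\fr=\{\psi F:F\in\mathcal{G}\}$ is normal on $D\setminus S$. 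This disposes of every point at which $\psi$ is finite and nonzero.

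It remains to establish normality at each $z_0\in S$, which is the heart of the argument and the step where hypothesis (2) is indispensable. We already know $\fr$ is normal on a punctured disk about $z_0$, so if normality failed at $z_0$ itself, Zalcman's Lemma would furnish $f_n\in\fr$, points $z_n\to z_0$, and scalings $\rho_n\to 0^+$ with $g_n(\zeta):=f_n(z_n+\rho_n\zeta)$ converging spherically, locally uniformly on $\C$, to a nonconstant meromorphic function $g$. Because every pair in $\fr$ shares $0$ and $\infty$, the zero set and the pole set are common to the whole family, hence discrete and, in a sufficiently small disk, reduced to at most $z_0$; by Hurwitz's theorem this forces $g$ to have at most one zero and at most one pole. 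The difficulty is that near a zero (resp.\ pole) of $\psi$ the moving target satisfies $\psi(z_n+\rho_n\zeta)\to 0$ (resp.\ $\infty$), so in the limit the target $\psi$ merges with $0$ (resp.\ $\infty$) and one cannot simply invoke a three-value theorem for $g$. This is exactly where condition (2) enters: at a common zero or pole of $f$ and $\psi$ the multiplicity of $f$ strictly exceeds that of $\psi$, which controls the interaction of the $\psi$-points with the zeros and poles of $f_n$ under rescaling and pins down the multiplicities with which $g$ can assume its values.

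I expect the main obstacle to be precisely this last analysis at the zeros and poles of $\psi$: showing that the merging of the target $\psi$ with $0$ or $\infty$, constrained by the strict multiplicity inequality in (2), forces the Zalcman limit $g$ either to omit enough values or to take them with such high multiplicity that Picard's theorem makes $g$ constant, contradicting the conclusion of Zalcman's Lemma. Once that contradiction is reached, $\fr$ is normal at each point of $S$ as well, and combined with the first part this yields normality throughout $D$.
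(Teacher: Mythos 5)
A preliminary remark on the comparison itself: the paper gives no proof of Theorem C --- it is quoted from Xu \cite{Xu 10} as background --- so your attempt can only be measured against the closely analogous arguments the paper does contain, namely Subcase 2.2 in the proof of Theorem \ref{Main1 function thesis} and Cases 2a--2c in the proof of Theorem \ref{M T}, which face exactly the same difficulty at the zeros and poles of $\psi$. Your first half is correct and is the natural reduction: on $D\setminus S$ the map $f\mapsto F=f/\psi$ turns sharing of $0,\psi,\infty$ into sharing of $0,1,\infty$, Sun's Theorem A gives normality of $\mathcal{G}=\{f/\psi\}$, and normality transfers back since $\psi$ is locally holomorphic and zero-free off $S$. (One small point to dispose of first: if some $f\equiv 0$, the sharing of $0$ forces every member of $\fr$ to vanish identically; you need this before calling the common zero set discrete.)

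The genuine gap is at the points of $S$, and it is one you flag yourself: the paragraph beginning ``I expect the main obstacle'' is a plan, not a proof. What you actually establish via Zalcman's Lemma is that the limit $g$ has at most one zero and at most one pole, and this is nowhere near a contradiction --- $g(\zeta)=\zeta$, $g(\zeta)=e^{\zeta}$, or any M\"obius map survives it, so Picard's theorem has nothing to bite on. To extract more you would have to analyze the moving target $\psi(z_n+\rho_n\zeta)$, which degenerates to the constant $0$ (or $\infty$) and requires a case split according to whether $(z_n-z_0)/\rho_n$ stays bounded or tends to $\infty$; none of this is carried out, and hypothesis (2) is invoked only rhetorically --- you never derive an omitted value or a multiplicity bound for $g$. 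Moreover, a subtle trap in your intended finish: multiplication by $\psi$ does \emph{not} preserve spherical local uniform convergence at a zero of $\psi$ (consider $\psi(z)=z$ and $F_n(z)=1/(z-1/n)$), so even a successful analysis of $\mathcal{G}$ at $z_0$ would not transfer back to $\fr$ for free. The workable completion is the one the paper itself uses for Theorem \ref{M T}: at a zero $z_0$ of $\psi$ of order $m$, note that condition (2) together with the sharing of $0$ and $\infty$ forces every $F=f/\psi$ to satisfy $F(z_0)\in\{0,\infty\}$ coherently across the family (in particular no $F$ takes the value $1$ at $z_0$), so $\mathcal{G}$ shares $0,1,\infty$ in a \emph{full} neighborhood of $z_0$ and Theorem A applies there; normality of $\fr$ at $z_0$ then follows by the maximum-principle transfer on a small circle $|z-z_0|=\delta$, exactly as in Case 2b, while at a pole of $\psi$ one multiplies by $(z-z_0)^k$ and runs the $1/f$ maximum-modulus argument of Case 2c. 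Replacing your Zalcman sketch at $z_0\in S$ by this normalization-plus-maximum-principle step would close the gap.
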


\begin{obs} We observe that we can not assure normality in case each pair $f, g$ of $\fr$ shares $0, \infty$. We have the following example supporting  this observation.
\begin{exam}
  Let $D=\{z:|z|<1\}$ and $\fr=\{nz: n\in \N\}.$ Clearly  each pair $f,\ g$ of $\fr$ shares $0, \infty$ but $\fr$ is not normal in $D$.
\end{exam}
This example also confirms that normality will no longer be assured if each pair $f, g$ of $\fr$ shares a holomorphic function which is identically $0$ in $D$.
\end{obs}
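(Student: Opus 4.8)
The plan is to verify the example's two assertions in turn: that every pair of members of $\fr$ shares $0$ and $\ity$, and that $\fr$ is nevertheless not normal in $D$.

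For the sharing statements, I would take two arbitrary members $f(z)=nz$ and $g(z)=mz$ of $\fr$. Since $nz=0$ precisely when $z=0$, we have $E_f(0)=E_g(0)=\{0\}$, so the pair shares $0$. For $\ity$, the point I would stress is that each member of $\fr$ is a polynomial, hence holomorphic on all of $D$, so it has no poles there; thus $E_f(\ity)=E_g(\ity)=\emptyset$ and every pair shares $\ity$ vacuously. This vacuous sharing of $\ity$ is precisely what makes the example relevant to the preceding observation, and I expect it is the one point worth flagging explicitly rather than any real difficulty.

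The content of the example is the failure of normality, which I would prove by contradiction using the sequence $f_n(z)=nz$. If $\fr$ were normal, then by definition some subsequence $f_{n_k}$ would converge spherically uniformly on every compact subset of $D$ to a limit that is either meromorphic on $D$ or identically $\ity$. I would then identify this limit pointwise: since $f_{n_k}(0)=0$ for all $k$, the limit takes the value $0$ at the origin, which already excludes the constant limit $\ity$; on the other hand, for any fixed $z_0$ with $0<|z_0|<1$ we have $|f_{n_k}(z_0)|=n_k|z_0|\to\ity$, so in the spherical metric the limit equals $\ity$ at every nonzero point of $D$. No meromorphic function can behave this way, since its poles are isolated, whereas here the poles of the alleged limit would be every point of $D\setminus\{0\}$ and would accumulate at $0$. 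This contradiction shows that no convergent subsequence exists, so $\fr$ is not normal.

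A cleaner alternative for the non-normality is to appeal to Marty's spherical-derivative criterion: a direct computation gives
\[
f_n^{\#}(z)=\frac{|f_n'(z)|}{1+|f_n(z)|^2}=\frac{n}{1+n^2|z|^2},
\]
so that $f_n^{\#}(0)=n\to\ity$. Since the spherical derivatives are unbounded on every neighbourhood of $0$, the family cannot be normal there. In either approach the only care required is to reason with the spherical metric rather than the Euclidean distance, because the family blows up at every point away from the origin; once this is kept in mind there is no substantive obstacle, the whole matter being essentially a bookkeeping verification against the definition of normality.
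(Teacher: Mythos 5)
Your proposal is correct and fills in exactly the routine verification that the paper leaves implicit: the authors give no argument at all, simply asserting the example with ``Clearly,'' so there is no competing method to compare against. Your checks --- sharing of $0$ because every $nz$ vanishes only at the origin, vacuous sharing of $\infty$ since polynomials have no poles, and non-normality either by contradicting the definition (limit $0$ at the origin but $\infty$ at every other point, impossible for a meromorphic limit or the constant $\infty$) or by Marty's criterion with $f_n^{\#}(0)=n\to\infty$ --- are precisely the standard reasoning the paper expects the reader to supply.
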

It is natural to ask  if  we can assure normality after removing  the condition of sharing  $0$ and $\infty$ in Theorem C$?$ In this paper we  discuss this  problem and propose a normality criterion where a holomorphic function is shared by each pair of  functions of the family. Let us recall the definition of shared function. We say two functions $f$ and $g$ share a function $h$ IM in a domain $D$, if $\{z\in D:f(z)=h(z)\}=\{z\in D:g(z)=h(z)\}$.
We obtain the following result which clearly generalizes Theorem B.

\begin{theorem}\label{Main1 function thesis}
  Let $\fr$ be a family of meromorphic functions defined on a domain $D\subset\C$ and let $\psi\not\equiv0$ be a holomorphic function in $D$ such that zeros of $\psi(z)$ are of multiplicity at most $m$. Suppose that
  \begin{enumerate}
    \item {all poles of $f$ are of multiplicity at least $3(m+1)\ (\text{or}\ 2(m+1))$,}
    \item { all zeros of $f$ are of multiplicity at least $2(m+1)\ (\text{resp.}\ 3(m+1) )$, }
    \item { each pair $f$ and $g$ of $\fr$ shares $\psi$ IM in $D$,}
  \end{enumerate}
  then $\fr$ is normal in $D$.
\end{theorem}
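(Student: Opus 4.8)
The plan is to argue locally, since normality is a local property: it suffices to prove that $\fr$ is normal at each point $z_0\in D$. I would split into two cases according to whether $\psi(z_0)\neq 0$ or $\psi(z_0)=0$, the first of which I expect to reduce directly to Theorem B.

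\emph{Case $\psi(z_0)\neq 0$.} On a disc $U\ni z_0$ on which $\psi$ is holomorphic and zero-free, set $\tilde F=(\psi-f)/\psi=1-f/\psi$ for each $f\in\fr$ and put $\tilde{\fr}=\{\tilde F:f\in\fr\}$. Then $\tilde F=0$ exactly where $f=\psi$, so each pair in $\tilde{\fr}$ shares $0$; the $1$-points of $\tilde F$ are precisely the zeros of $f$, and the poles of $\tilde F$ are precisely the poles of $f$ (since $\psi\neq 0,\infty$ on $U$). Hence the multiplicity hypotheses (1)--(2) translate exactly into the two alternatives of Theorem B for $\tilde{\fr}$, so $\tilde{\fr}$ is normal on $U$. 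Since $f=\psi(1-\tilde F)$ and $\psi$ is a fixed function bounded and bounded away from $0$ on compact subsets of $U$, normality of $\tilde{\fr}$ transfers back to $\fr$ on $U$. Note that the large multiplicities $2(m+1),3(m+1)$ are far more than needed here; they are dictated by the remaining case.

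\emph{Case $\psi(z_0)=0$.} Write $\psi(z)=(z-z_0)^{\ell}u(z)$ with $u(z_0)\neq 0$ and $1\le\ell\le m$. Assuming $\fr$ is not normal at $z_0$, I would apply the rescaling lemma of Pang and Zalcman with exponent $\al=\ell$ (admissible since $1\le\ell\le m<2(m+1)$) to obtain $f_n\in\fr$, $z_n\to z_0$ and $\rho_n\to 0^+$ with
\[
g_n(\zeta)=\rho_n^{-\ell}f_n(z_n+\rho_n\zeta)\to g(\zeta)
\]
locally uniformly in the spherical metric, where $g$ is a nonconstant meromorphic function on $\C$; by Hurwitz's theorem the zeros of $g$ inherit multiplicity $\ge 2(m+1)$ and its poles multiplicity $\ge 3(m+1)$ (the two multiplicities being interchanged in the other alternative). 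After passing to a subsequence so that $a_n:=(z_n-z_0)/\rho_n\to a\in\C$, the scaled target converges, $\Psi_n(\zeta):=\rho_n^{-\ell}\psi(z_n+\rho_n\zeta)\to\Psi(\zeta):=u(z_0)(\zeta+a)^{\ell}$, a polynomial of degree $\ell\le m$. It is precisely the choice $\al=\ell$ that makes $\Psi_n$ tend to a nonconstant polynomial, which is how the factor $(m+1)$ in the hypotheses becomes tied to the bound $m$ on the multiplicity of the zeros of $\psi$.

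The crux is to extract the sharing information in the limit: I claim $g-\Psi$ has essentially no zeros. Indeed, if $g(\zeta_0)=\Psi(\zeta_0)$ with $g\not\equiv\Psi$ (which holds, since a polynomial of degree $\le m$ cannot have a zero of multiplicity $\ge 2(m+1)$), then $g_n-\Psi_n\to g-\Psi$ and Hurwitz produce $\zeta_n\to\zeta_0$ with $w_n:=z_n+\rho_n\zeta_n\in S$, where $S=\{z\in D:f(z)=\psi(z)\}$ is the set common to all $f\in\fr$ by hypothesis (3); moreover $w_n\to z_0$. Fixing any $f_1\in\fr$, sharing gives $f_1(w_n)=\psi(w_n)\to\psi(z_0)=0$. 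If $z_0$ is a pole of $f_1$ this is already absurd; otherwise $f_1(z_0)=0$, so $f_1$ has a zero of multiplicity $\ge 2(m+1)>\ell$ at $z_0$, whence $f_1-\psi\sim-\psi$ is nonvanishing on a punctured neighbourhood of $z_0$, contradicting $w_n\in S$, $w_n\to z_0$. Thus $g-\Psi$ can vanish only at $\zeta=-a$, so $\overline N\!\left(r,1/(g-\Psi)\right)=S(r,g)$. Treating $0,\infty,\Psi$ as three distinct small functions for the transcendental limit $g$, Nevanlinna's second main theorem together with $\overline N(r,1/g)\le\tfrac{1}{2(m+1)}T(r,g)+O(1)$ and $\overline N(r,g)\le\tfrac{1}{3(m+1)}T(r,g)+O(1)$ yields
\[
\Bigl(1-\tfrac{1}{2(m+1)}-\tfrac{1}{3(m+1)}\Bigr)T(r,g)\le S(r,g),
\]
and since $1-\frac{5}{6(m+1)}>0$ for every $m$, this forces $g$ to be constant, a contradiction.

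The main obstacle I anticipate is the bookkeeping in the second case rather than any single deep estimate. One must justify that $a_n=(z_n-z_0)/\rho_n$ stays bounded: the subcase $a_n\to\infty$ ought to be absorbed into the first case, since there $\psi$ is, after scaling over the window $z_n+\rho_n\zeta$, a nonvanishing near-constant. One must also treat separately the single degenerate point $\zeta=-a$ at which $\Psi$ vanishes (harmless for the counting, as it contributes $O(1)$ to $\overline N$), and dispose of the possibility that $g$ is rational, where the second main theorem is replaced by an elementary valence/Riemann--Hurwitz count against $g-\Psi$ being zero-free. Pinning down the admissible range of the exponent $\al$ in the Pang--Zalcman lemma under hypotheses (1)--(2) is the final technical point.
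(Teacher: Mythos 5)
Your Case 1 is correct and is genuinely different from (and slicker than) the paper's: you reduce normality near a point where $\psi\neq 0$ to Theorem B via $\tilde F=1-f/\psi$ — the same trick the paper's own remark uses for $\psi\equiv 1$ — whereas the paper runs Zalcman's lemma and proves two auxiliary lemmas (its Lemmas 2.1 and 2.2, via the second main theorem and a rational-function count) to show the limit $g-\psi(0)$ has two distinct zeros, then collapses them using the sharing hypothesis. Your reduction even shows the multiplicities $2(m+1),3(m+1)$ can be relaxed to $2,3$ away from the zeros of $\psi$. The problem is Case 2.

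The genuine gap is exactly the point you flag and then wave away: the subcase $a_n=(z_n-z_0)/\rho_n\to\infty$ is \emph{not} ``absorbed into the first case.'' With your normalization $\al=\ell$, in that regime $\Psi_n(\zeta)=\rho_n^{-\ell}\psi(z_n+\rho_n\zeta)=a_n^{\ell}\,u(\cdot)(1+o(1))\to\infty$ locally uniformly, so $g_n-\Psi_n\to\infty$: Hurwitz produces no points of the sharing set $S$ in any compact $\zeta$-window, the comparison function drops out of the limit, and nothing then contradicts a nonconstant limit such as $g(\zeta)=\zeta^{2(m+1)}$, which satisfies all the inherited multiplicity constraints. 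Nor does your Case 1 rescue this: it gives normality of $\fr$ on the punctured neighbourhood $N_r'(z_0)$, but the windows $z_n+\rho_n K$ eventually leave every compact subset of $N_r'(z_0)$, so equicontinuity there says nothing about the rescaled sequence. This is precisely where the paper proceeds differently: it never rescales at a zero of $\psi$. Instead, using sharing it splits into (i) no $f\in\fr$ meets $\psi$ on a neighbourhood of $z_0$, which it hands off to Theorem D$'$ (Xu's corrected theorem — a nontrivial external input your sketch does not use), and (ii) every $f$ meets $\psi$ exactly at $z_0$, where it passes to the quotient family $\mathcal{G}=\{f/z^{t}\}$, applies case (i) to $\mathcal{G}$, and patches normality across $z_0$ by a maximum-principle argument using $g(z_0)=0$ for all $g\in\mathcal{G}$ (valid because $2(m+1)>t$). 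Your plan needs one of these two mechanisms, or a substitute, to close the unbounded subcase; as written it fails there.

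A secondary debt: when the limit $g$ is rational your small-function second main theorem argument is vacuous, and the ``elementary valence/Riemann--Hurwitz count'' you defer to is exactly the content of the paper's Lemma 2.2 (that under the multiplicity hypotheses $g-\Psi$ has at least two distinct zeros, which is what you need to beat the single admissible point $\zeta=-a$). That lemma costs the paper a full page of $(m+1)$-fold differentiation and degree bookkeeping, including the delicate subcase $M<N$; it must be proved, not presumed.
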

\begin{corollary}\label{Cor}
  Let $\fr$ be a family of meromorphic functions defined on a domain $D\subset\C$.  Suppose that
  \begin{enumerate}
    \item {all poles of $f$ are of multiplicity at least $3\ (\text{or}\ 2)$,}
    \item { all zeros of $f$ are of multiplicity at least $2\ (\text{resp.}\ 3 )$, }
    \item { each pair $f$ and $g$ of $\fr$ shares $1$ IM in $D$,}
  \end{enumerate}
  then $\fr$ is normal in $D$.

\end{corollary}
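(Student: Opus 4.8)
The plan is to obtain the Corollary as the special case $\psi\equiv 1$ of Theorem \ref{Main1 function thesis}, so that no new analytic work is required beyond verifying that the hypotheses specialize correctly.

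First I would take $\psi(z)\equiv 1$, which is holomorphic and not identically $0$ on $D$. Since this $\psi$ has no zeros at all, the requirement in Theorem \ref{Main1 function thesis} that the zeros of $\psi$ have multiplicity at most $m$ holds vacuously, and I may choose $m=0$. With this choice the numerical thresholds in the Theorem become $3(m+1)=3$ and $2(m+1)=2$ (and, in the parallel case, $2(m+1)=2$ and $3(m+1)=3$), which are exactly the multiplicity bounds imposed on the poles and zeros of $f$ in hypotheses (1) and (2) of the Corollary, with the two alternatives matching up in the same way. Moreover, for the constant function $\psi\equiv 1$ the set $\{z\in D: f(z)=\psi(z)\}$ is precisely $E_f(1)$, so the statement ``each pair $f,g$ shares $1$ IM'' is literally the statement that each pair shares the holomorphic function $\psi\equiv 1$ IM, which is hypothesis (3) of Theorem \ref{Main1 function thesis}.

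Having matched all three hypotheses, I would then invoke Theorem \ref{Main1 function thesis} directly to conclude that $\fr$ is normal in $D$. I do not expect any genuine obstacle at the level of the Corollary: the entire analytic content---the rescaling via Zalcman's Lemma, the exploitation of the shared function, and the value-distribution estimates governed by the multiplicity thresholds---is already carried inside Theorem \ref{Main1 function thesis}. The only points that warrant an explicit line of justification are the legitimacy of taking $m=0$ for a nowhere-vanishing $\psi$ and the identification of value-sharing of $1$ with function-sharing of the constant $\psi\equiv 1$, both of which are immediate from the definitions recalled in the introduction.
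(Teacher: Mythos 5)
Your proposal is correct and is exactly the paper's intended derivation: the Corollary is stated as the specialization of Theorem \ref{Main1 function thesis} to $\psi\equiv 1$ with $m=0$ (a choice the paper itself makes explicit in the example following the Corollary), and the multiplicity thresholds $3(m+1)=3$, $2(m+1)=2$ match the hypotheses as you verify. The paper's only additional remark is that the Corollary can \emph{also} be obtained from Theorem B via the family $\fr_1=\{1-f: f\in\fr\}$, but your route is the primary one.
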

\begin{corollary}
  Let $\fr$ be a family of holomorphic functions defined on a domain $D\subset\C$ and let $\psi\not\equiv 0$ be a holomorphic function in $D$ such that zeros of $\psi(z)$ are of multiplicity at most $m$. Suppose that
  \begin{enumerate}
        \item { all zeros of $f$ are of multiplicity at least $2(m+1)$, }
    \item { each pair $f$ and $g$ of $\fr$ shares $\psi$ IM in $D$,}
  \end{enumerate}
  then $\fr$ is normal in $D$.
\end{corollary}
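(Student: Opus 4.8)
The plan is to obtain this corollary as an immediate specialization of Theorem~\ref{Main1 function thesis}. A family of holomorphic functions is in particular a family of meromorphic functions, and the notion of normality is the same in both settings---the possible limit $\ity$ is permitted in each case, as in the definition recalled above---so it suffices to check that the hypotheses of the corollary imply those of the theorem for a suitable choice of the alternative offered in conditions (1)--(2).

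First I would take the first alternative in Theorem~\ref{Main1 function thesis}, namely poles of multiplicity at least $3(m+1)$ together with zeros of multiplicity at least $2(m+1)$. Since every $f\in\fr$ is holomorphic, $f$ has no poles at all, so condition (1) of the theorem holds vacuously, whatever threshold is imposed on the poles. Condition (2) of the theorem, that all zeros of $f$ have multiplicity at least $2(m+1)$, is precisely hypothesis (1) of the corollary, and the sharing condition (3) of the theorem is precisely hypothesis (2) of the corollary. Hence all hypotheses of Theorem~\ref{Main1 function thesis} are satisfied, and the theorem yields the normality of $\fr$ in $D$.

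If instead one wanted a self-contained argument avoiding the theorem, I would run the Zalcman--Pang rescaling directly: assuming $\fr$ is not normal at some $z_0\in D$, one extracts $f_n\in\fr$, points $z_n\to z_0$, and scales $\rho_n\to 0^+$ so that an appropriate normalization of $f_n(z_n+\rho_n\zeta)$ converges locally uniformly to a nonconstant limit $g$ on $\C$. Because the $f_n$ are holomorphic with zeros of multiplicity at least $2(m+1)$, the limit $g$ is entire (no pole can appear in the limit) and all of its zeros have multiplicity at least $2(m+1)\ge 2$. The sharing hypothesis, transported through the rescaling, would then force $g$ to omit the relevant value, and an entire function all of whose zeros are multiple which also omits a value must be constant, contradicting the nonconstancy of $g$.

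The main obstacle in that direct route is the bookkeeping for the shared function $\psi$ near its own zeros, where the scaling exponent and the multiplicity count must be tracked simultaneously; this is exactly the technical core that Theorem~\ref{Main1 function thesis} already resolves. For this reason I expect the derivation from the theorem to be both correct and the cleanest presentation, and that is the proof I would record.
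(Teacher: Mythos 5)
Your derivation is correct and matches the paper's intent exactly: the paper states this corollary without separate proof precisely because, as you observe, a holomorphic family satisfies the pole condition of Theorem~\ref{Main1 function thesis} vacuously, and the remaining hypotheses (first alternative) coincide with those of the corollary. The sketch of a self-contained Zalcman argument is unnecessary but harmless; the short specialization you record is the right proof.
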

\begin{remark}
  It is easy to see that Corollary \ref{Cor} can also  be obtained from Theorem B, by considering the family $\fr_1=\{1-f: f\in \fr\}$.
\end{remark}
The following example shows that the condition on the multiplicities of zeros in Corollary \ref{Cor} (and hence also in Theorem \ref{Main1 function thesis}) is necessary.
\begin{example}
  Let $D=\{z:|z|<1\}$ and $\fr=\{nz+1: n\in \N\}.$ Let $\psi(z)\equiv1$. Then $m=0$ and each pair $f,\ g$ of $\fr$ shares $\psi$ but $\fr$ is not normal in $D$.
\end{example}
We thank the referee for suggesting that by using a result of Xu (cf. Theorem D
below) and Corollary \ref{Cor} one can relax condition ($1$) in Theorem \ref{Main1 function thesis} to multiplicity at least $3$. We could improve this further. We state the improved result as follows.
\begin{theorem}\label{M T}
  Let $\fr$ be a family of meromorphic functions defined on a domain $D\subset \C$ and let $\psi\not\equiv0,\infty$ be  a meromorphic function in $D$. Suppose that
  \begin{enumerate}
    \item {every zero of $f$ has multiplicity at least $2$,}
    \item {every pole of $f$ has multiplicity at least $3$,}
    \item { at the common zeros or poles of $f$ and $\psi$, the multiplicity of $f$ is larger than that of $\psi$,}
    \item {$f$ and $g$ share $\psi$, for each pair $f$ and $g$ in $\fr$,}
  \end{enumerate}
  then $\fr$ is normal in $D$.
\end{theorem}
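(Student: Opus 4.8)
Since normality is a local property, the plan is to fix a point $z_0\in D$ and prove that $\fr$ is normal at $z_0$. First I dispose of the degenerate possibility that some $f_0\in\fr$ satisfies $f_0\equiv\psi$: by the sharing hypothesis~(4) every other member would then coincide with $\psi$ on the non-discrete set $\{f_0=\psi\}$, forcing $\fr=\{\psi\}$, which is trivially normal. So I may assume $f\not\equiv\psi$ for all $f\in\fr$; then each $f-\psi$ has isolated zeros, and by~(4) the coincidence set $Z=\{z:f(z)=\psi(z)\}$ is the same for every $f\in\fr$. Shrinking to a disc $U\ni z_0$ on which $\psi$ is meromorphic and, away from $z_0$, holomorphic and zero-free, I arrange $Z\cap U\subseteq\{z_0\}$, so that $f\neq\psi$ throughout $U\setminus\{z_0\}$ for every $f\in\fr$.

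The engine is the substitution $F=f/\psi$, which converts ``sharing $\psi$'' into ``sharing $1$.'' If $\psi(z_0)\neq0,\infty$, then $\psi$ is holomorphic and zero-free on $U$, so $F$ has exactly the zeros and poles of $f$ and $F=1$ precisely where $f=\psi$; thus $\tilde\fr=\{f/\psi\}$ has zeros of multiplicity $\ge2$, poles of multiplicity $\ge3$, and each pair shares $1$, and Corollary~\ref{Cor} applies at once. The substantive case is $\psi(z_0)\in\{0,\infty\}$, say $z_0$ a zero of $\psi$ of order $\ell$ (the pole case being symmetric). Here $F=f/\psi$ still omits the value $1$ on all of $U$: on $U\setminus\{z_0\}$ this is the isolation of $Z$, while at $z_0$ it is exactly hypothesis~(3) that does the work --- if $f(z_0)=0$ then $f$ vanishes to order $>\ell$, so $F(z_0)=0\neq1$, and if $f(z_0)\neq0$ then $F$ has a pole at $z_0$. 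The price is that the multiplicity bounds on the zeros and poles of $F$, inherited from $f$ through the finite zero-free factor $\psi$, are guaranteed only on $U\setminus\{z_0\}$; at the single point $z_0$ the forced zero or pole of $F$ need not respect them.

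I then claim $\tilde\fr$ is normal at $z_0$. If not, Zalcman's Lemma produces $F_n\in\tilde\fr$, points $w_n\to z_0$ and scalars $\sigma_n\to0^+$ with $\Phi_n(\zeta)=F_n(w_n+\sigma_n\zeta)$ converging spherically and locally uniformly on $\C$ to a nonconstant meromorphic $\Phi$. Since every $F_n$ omits $1$, Hurwitz's theorem gives that $\Phi$ omits $1$; and at every $\zeta\neq\zeta_*:=\lim(z_0-w_n)/\sigma_n$ (the only $\zeta$ whose image $w_n+\sigma_n\zeta$ can reach $z_0$) the zeros and poles of $\Phi$ are limits of zeros and poles of $F_n$ lying in $U\setminus\{z_0\}$, hence have multiplicity $\ge2$ and $\ge3$ respectively. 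The single exceptional point $\zeta_*$ contributes only $O(\log r)=S(r,\Phi)$ to every truncated counting function, so Nevanlinna's second main theorem for $0,1,\infty$ yields defects $\Theta(1)=1$, $\Theta(0)\ge\tfrac12$, $\Theta(\infty)\ge\tfrac23$, whence $\sum_a\Theta(a)\ge\tfrac{13}{6}>2$, contradicting the defect relation; a rational $\Phi$ is excluded by the corresponding Riemann--Hurwitz count of ramification over $0,1,\infty$. Thus $\tilde\fr$ is normal at $z_0$.

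The main obstacle is the final step --- transferring normality back from $\tilde\fr$ to $\fr=\psi\,\tilde\fr$ across the zero of $\psi$. This is not automatic: multiplying by a factor that vanishes at $z_0$ can destroy normality (for $\psi(z)=z$ and $f_n=nz$ the quotients $n$ are normal while $\{nz\}$ is not), precisely the behaviour that hypotheses~(1) and~(3) are designed to forbid. Given a sequence $f_n=\psi F_n$ with $F_{n_k}\to\Phi$ spherically near $z_0$, the only danger is a ``$0\cdot\infty$'' cancellation at $z_0$, i.e.\ $\Phi(z_0)=\infty$ with $F_{n_k}(z_0)$ finite but large. Under our hypotheses this cannot occur: if $f_{n_k}(z_0)=0$ then~(3) forces $F_{n_k}(z_0)=0$, while if $f_{n_k}(z_0)\neq0$ then $F_{n_k}$ has a pole at $z_0$; so $\Phi(z_0)=\infty$ forces genuine poles of $F_{n_k}$ at $z_0$, and a Weierstrass-factorisation (maximum-principle) argument then shows $f_{n_k}=\psi F_{n_k}\to\psi\Phi$ spherically near $z_0$. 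Hence $\fr$ is normal at $z_0$, and, $z_0$ being arbitrary, throughout $D$.
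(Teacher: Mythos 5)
Your proof is correct in substance, but it takes a genuinely different route from the paper's. The paper splits into cases according to whether $f(z_0)\neq\psi(z_0)$ and the value of $\psi(z_0)$: the omitted-function cases are disposed of by citing Xu's Theorem D$'$ as a black box; the case $f(z_0)=\psi(z_0)\neq0,\infty$ is your nondegenerate case (quotient by $\psi$, Corollary \ref{Cor}); and when $\psi(z_0)=0$ (resp.\ $=\infty$) the paper divides by $(z-z_0)^m$ (resp.\ multiplies by $(z-z_0)^k$), keeping $\psi/(z-z_0)^m$ as comparison function, reduces to Theorem D$'$, and transfers normality back by a boundedness/maximum-principle argument very much in the spirit of your final step. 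You instead divide by $\psi$ itself in all cases and, where $\psi(z_0)\in\{0,\infty\}$, replace the appeal to Theorem D$'$ by a self-contained Zalcman-plus-second-main-theorem argument that tolerates one exceptional point of bad multiplicity. This extra device is not gratuitous: the forced zero or pole of $f/\psi$ at $z_0$ has uncontrolled multiplicity, so neither Corollary \ref{Cor} nor Theorem D$'$ applies verbatim to your family $\tilde\fr$ --- and, notably, the same degeneration afflicts the paper's own reduction (its family $\{f/(z-z_0)^m\}$ can have a simple zero, or a pole of order $m<3$, at $z_0$, strictly outside the hypotheses of Theorem D$'$), so your exceptional-point bookkeeping is more careful on a point the paper glosses over. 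Your transfer step is also sound: hypothesis (3) forces $F_n(z_0)\in\{0,\infty\}$, which rules out the drifting-pole ``$0\cdot\infty$'' catastrophe, and the reciprocal-plus-maximum-principle argument (with $1/f_n=(1/F_n)/\psi$ holomorphic near $z_0$ because the pole order of $F_n$ at $z_0$ is at least the vanishing order of $\psi$) does go through. Two details are asserted rather than executed and should be written out: the exclusion of a rational limit $\Phi$ (the Riemann--Hurwitz/derivative-degree count does work, forcing $\deg\leq1$ and then a clash between two required exceptional points), which matters because for rational $\Phi$ the $O(\log r)$ contribution of $\zeta_*$ is \emph{not} $S(r,\Phi)$; and the pointwise (Pang--Zalcman) refinement giving $w_n\to z_0$, which is stronger than the disc version of Zalcman's Lemma quoted in the paper, though it follows from normality of $\tilde\fr$ on the punctured disc (omitting $1$ trivially implies sharing $1$, so Corollary \ref{Cor} applies there). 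In exchange for these routine gaps, your proof is independent of Theorem D$'$, whereas the paper's is shorter but leans on Xu's result.
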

The following example shows that the condition on common zeros is necessary in Theorem \ref{M T}.
\begin{example}
  Let $k, m$ be  two integers such that $2\leq k\leq m$, let $D:=\{z:|z|<1\}, \psi(z)=z^m$ and
  \begin{equation*}
    \fr=\left\{f_n(z)=(n+2)z^k: z\in D, n\in\N\right\}.
  \end{equation*}For each $f_n\in\fr,$ we have \begin{enumerate}
                                                 \item $f_n$ has zeros of multiplicity $k\geq2$,
                                                 \item $f$ has no pole,
                                                 \item for each $i, j$, $f_i$ and $f_j$ share $\psi$ in $D$,
                                                 \item  at the common zero of $f_n$ and $\psi$ the multiplicity of $\psi$ is larger  than or equal to the multiplicity of $f_n$.
                                               \end{enumerate} But $\fr$ is not normal in $D$.
\end{example}
The following example shows that the condition on common poles is necessary in Theorem \ref{M T}.
\begin{example}\label{example Xu}
  Let $k, m$ be  two  integers such that $3\leq k\leq m$, let $D:=\{z:|z|<1\}, \displaystyle{\psi(z)=\frac{1}{z^{m}}}$ and
  \begin{equation*}
    \fr=\left\{f_n(z)=\frac{1}{(n+2)z^k}: z\in D, n\in\N\right\}.
  \end{equation*}For each $f_n\in\fr,$ we have \begin{enumerate}
                                                 \item $f_n$ has poles of multiplicity $k\geq 3$,
                                                 \item $f$ has no zero,
                                                 \item for each $i, j$, $f_i$ and $f_j$ share $\psi$ in $D$,
                                                 \item at the common pole   of $f_n$ and $\psi$ the multiplicity of $\psi$ is larger than or equal to that of $f_n$.
                                               \end{enumerate} But $\fr$ is not normal in $D$.
\end{example}

The following result was proved by Xu \cite{Xu 05}.
\begin{thme}\cite{Xu 05}
   Let $\fr$ be a family of meromorphic functions defined in a domain $D\subset \C$ and let $\psi(\not\equiv0)$ be  a meromorphic function in $D$. For every $f\in\fr$, if
  \begin{enumerate}
    \item[(i)] {$f$ has only multiple zeros,}
    \item[(ii)] {the poles of $f$ have multiplicity at least $3$,}
    \item [(iii)]{ at the common poles of $f$ and $\psi$, the multiplicity of $f$ does not equal the multiplicity of $\psi$,}
    \item[(iv)] {$f(z)\neq\psi(z)$, }
  \end{enumerate}
  then $\fr$ is normal in $D$.
 \end{thme}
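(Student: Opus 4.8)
The plan is to argue by contradiction through the standard rescaling (Zalcman–Pang) machinery, the contradiction being supplied by Nevanlinna's second fundamental theorem. Since normality is a local property, it suffices to prove that $\fr$ is normal at each point $z_0\in D$. I would fix such a $z_0$ and choose a disc $\Delta\subset D$ centred at $z_0$ on which $\psi$ has no zeros or poles other than possibly $z_0$ itself; this is legitimate because the zeros and poles of the meromorphic function $\psi\not\equiv0$ are isolated. The whole argument then splits according to whether $\psi(z_0)$ is a nonzero finite number, is $0$, or is $\infty$.

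The generic case is $\psi(z_0)=a$ with $a\in\C\setminus\{0\}$. Assuming $\fr$ is not normal at $z_0$, Zalcman's Lemma (in Pang's quantitative form, using that by (i) all zeros of $f$ have multiplicity $\ge2$ and by (ii) all poles multiplicity $\ge3$) yields $f_n\in\fr$, $z_n\to z_0$ and $\rho_n\to 0^+$ such that $g_n(\zeta)=f_n(z_n+\rho_n\zeta)$ converges spherically locally uniformly on $\C$ to a nonconstant meromorphic $g$ whose zeros have multiplicity $\ge2$ and whose poles have multiplicity $\ge3$. Because $\psi(z_n+\rho_n\zeta)\to a$ while $f_n\ne\psi$, Hurwitz's theorem forces $g\ne a$ on $\C$. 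Applying the second fundamental theorem to $g$ with the targets $0,a,\infty$ and using
\[
\overline{N}\!\left(r,\tfrac1g\right)\le\tfrac12\,T(r,g),\qquad \overline{N}(r,g)\le\tfrac13\,T(r,g),\qquad \overline{N}\!\left(r,\tfrac1{g-a}\right)=0,
\]
one gets $T(r,g)\le\tfrac56\,T(r,g)+S(r,g)$, impossible for nonconstant $g$. Hence $\fr$ is normal at $z_0$ in this case.

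The substance of the theorem lies in the two remaining cases, where the target degenerates. If $\psi(z_0)=0$ with multiplicity $p$, I would write $\psi(z)=(z-z_0)^p\psi_1(z)$ with $\psi_1(z_0)=c\ne0$ and run the rescaling centred at $z_0$ with the exponent tuned to $p$, namely $g_n(\zeta)=\rho_n^{-p}f_n(z_0+\rho_n\zeta)$; then $\rho_n^{-p}\psi(z_0+\rho_n\zeta)\to c\zeta^p$, so that $f_n\ne\psi$ passes to the limit, by Hurwitz, as the statement that $g-c\zeta^p$ has no zeros, the normalisation $g^{\#}(\zeta)\le g^{\#}(0)=1$ ruling out the degenerate alternative $g\equiv c\zeta^p$. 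Treating the polynomial $c\zeta^p$ as a small function of the finite-order limit $g$, the second fundamental theorem with targets $0,\infty,c\zeta^p$ again yields $T(r,g)\le(\tfrac12+\tfrac13)T(r,g)+S(r,g)$, a contradiction, with the rational case excluded by a direct comparison of zero and pole counts. The case $\psi(z_0)=\infty$ of multiplicity $q$ I would treat dually, passing to $G=1/f$ (zeros of multiplicity $\ge3$, poles of multiplicity $\ge2$) and $\Psi=1/\psi$ (a zero of multiplicity $q$ at $z_0$): here condition (iii), that at a common pole of $f$ and $\psi$ the multiplicities differ, is exactly what guarantees that $G-\Psi$ is zero-free in a punctured neighbourhood of $z_0$, so that after the same exponent-$q$ rescaling $g-c'\zeta^q$ has no zeros apart from a single negligible point and the identical Nevanlinna estimate closes the case.

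The hard part is precisely this degeneration of the omitted target to $0$ or $\infty$ at the zeros and poles of $\psi$. There the naive rescaling (exponent $0$) produces a limit that violates nothing — for instance an exponential, which omits $0$, has no poles, and is perfectly admissible — so one is forced to blow up with the exponent equal to the local multiplicity of $\psi$ and to verify both that the rescaled $\psi$ survives as a genuine nonzero polynomial and that $f_n\ne\psi$ survives Hurwitz as a zero-free statement for $g$ minus that polynomial. Confirming that this exponent is admissible in the Zalcman–Pang lemma, that the normalisation $g^{\#}(0)=1$ excludes $g\equiv c\zeta^p$, and — in the pole case — that condition (iii) keeps $f-\psi$ zero-free near common poles so that the defect term $\overline{N}(r,1/(g-c'\zeta^q))$ remains $S(r,g)$, are the delicate points on which the whole argument turns.
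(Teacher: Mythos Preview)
The statement you are trying to prove is false as stated, and the paper itself says so: immediately after quoting Theorem~D it exhibits the family $\fr=\{f_n(z)=1/((n+2)z^k)\}$ on the unit disc with $\psi(z)=1/z^m$ and $3\le k<m$, which satisfies all four hypotheses (in particular~(iii), since the pole multiplicities $k$ and $m$ differ) but is not normal at $0$. The paper then replaces~(iii) by the stronger requirement that at common poles the multiplicity of $f$ be \emph{larger} than that of $\psi$ (Theorem~D$'$), and it is only this corrected version that is actually used in the proofs of Theorems~\ref{Main1 function thesis} and~\ref{M T}.

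The gap in your argument sits precisely where the counterexample bites, the case $\psi(z_0)=\infty$. You pass to $G=1/f$, $\Psi=1/\psi$, and propose an exponent-$q$ rescaling with $q$ the pole order of $\psi$. But condition~(iii) permits $f$ to have a pole at $z_0$ of order $p<q$, so $G$ has a zero of order $p<q$ there; the Pang--Zalcman lemma with exponent $q$ is then inadmissible (it requires the zero multiplicities to exceed the exponent), and indeed in the counterexample $\rho_n^{-q}G_n(z_0+\rho_n\zeta)=(n+2)\rho_n^{k-m}\zeta^k$ blows up for every choice $\rho_n\to 0$. Even at the Hurwitz step your claim is off: $G_n-\Psi$ vanishes to order $p$ at $z_0$, so ``a single negligible point'' is not negligible at all, and after rescaling this produces a genuine polynomial factor that kills the estimate $\overline N(r,1/(g-c'\zeta^q))=S(r,g)$. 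Under the corrected hypothesis $p>q$ of Theorem~D$'$ these obstructions disappear, and an argument along your lines can be made to work; but for Theorem~D as written no proof exists.
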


We find that Theorem D is not true. Example \ref{example Xu} shows that the analysis was not completed  in Theorem D. Theorem D can be stated as follows in its correct formulation. It can be seen easily that now the proof of Xu \cite{Xu 05} works smoothly.
\begin{thme'}\label{Xu 05 1}
  Let $\fr$ be a family of meromorphic functions defined in a domain $D$. Let $\psi$ $($$\not\equiv0,\infty$$)$ be a function meromorphic in $D$. For every function $f\in \fr$, if
  \begin{enumerate}
    \item {every zero of $f$ has multiplicity at least $2$,}
    \item {every pole of $f$ has multiplicity at least $3$,}
    \item { at the common  poles of $f$ and $\psi$, the multiplicity of $f$ is larger than that of $\psi$,}
    \item {$f(z)\neq \psi(z)$,}
  \end{enumerate}
  then $\fr$ is normal in $D$.
\end{thme'}
In \cite{Xu 05} the meaning of $f(z) \neq  \psi(z)$ is not defined. But from condition (iii) in Theorem D it is clear that it is meant to mean that the meromorphic function $f-\psi$  has no zeros. As  $\psi$ and $f$ might have poles, this is a weaker condition than $f(z_0)\neq \psi(z_0)$ for all $z_0 \in \C$. In particular, Theorem D$'$ is not an immediate corollary to Theorem 1.6 (apart from the fact that we are using Theorem D$'$ to prove Theorem 1.6).
\section{Proof of Main Theorems }
We need some preparation for proving our main result. Zalcman proved a striking result that studies consequence of  non-normality ~\cite{Zalc}. Roughly speaking, it says that in an infinitesimal scaling the family gives a non-constant entire function under the compact-open topology. We  state this renormalization result which has now come to be known as {\it Zalcman's Lemma}.
\begin{zl}~\cite{Zalc}{\ A family $\mathcal F$ of functions meromorphic $($holomorphic$)$ on the unit disc $\Delta$ is not normal if and only if there exist
\begin{enumerate}
\item[$(a)$]{a number r, $0<r<1$}
\item[$(b)$]{points $z_j, |z_j|<r$}
\item[$(c)$]{functions $\{f_j\}\subseteq \mathcal F$}
\item[$(d)$]{numbers $\rho_j\rightarrow0^+$}
 \end{enumerate}
such that
\begin{equation}\notag
f_j(z_j+\rho_j\zeta)\rightarrow g(\zeta)
\end{equation}
spherically uniformly $($uniformly$)$ on compact subsets of $\C$, where $g$ is a non-constant meromorphic $($entire$)$ function on $\C$.  }\end{zl}

Before proving Theorem \ref{Main1 function thesis} we prove some auxiliary results.
\begin{lemma}\label{Main1 function thesis Lemma1}
Let $f$ be a transcendental meromorphic function  and let  $p\not \equiv0$ be a polynomial. Suppose that every zero of $f$ has multiplicity at least $2\ (\text{or} \ 3)$ and every pole of $f$ has multiplicity at least $3\ (\text{resp.} \ 2)$.  Then $f-p$ has infinitely many zeros.
\end{lemma}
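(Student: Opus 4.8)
The plan is to argue by contradiction using Nevanlinna's value distribution theory, treating $p$ as a small function of $f$. Suppose, to the contrary, that $f-p$ has only finitely many zeros. Since $f$ is transcendental while $p$ is a polynomial, we have $T(r,p)=O(\log r)=S(r,f)$, so $p$ is a small function relative to $f$; in particular the three targets $0$, $\infty$, $p$ are admissible for the second fundamental theorem.

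First I would apply the second fundamental theorem (in its form valid for small functions) to $f$ with the targets $0$, $\infty$ and $p$, which yields
\[
T(r,f)\le \overline{N}(r,1/f)+\overline{N}(r,f)+\overline{N}\!\left(r,\frac{1}{f-p}\right)+S(r,f).
\]
Next I would feed in the multiplicity hypotheses. Because every zero of $f$ has multiplicity at least $2$, each distinct zero contributes at least $2$ to the counting function with multiplicity, whence $\overline{N}(r,1/f)\le \tfrac12 N(r,1/f)\le \tfrac12 T(r,f)+O(1)$, the last inequality by the first fundamental theorem. Similarly, since every pole of $f$ has multiplicity at least $3$, we get $\overline{N}(r,f)\le \tfrac13 N(r,f)\le \tfrac13 T(r,f)+O(1)$. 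Finally, the contradiction hypothesis that $f-p$ has only finitely many zeros gives $\overline{N}(r,1/(f-p))=O(\log r)=S(r,f)$.

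Substituting these three estimates into the second fundamental theorem leaves
\[
T(r,f)\le \tfrac12 T(r,f)+\tfrac13 T(r,f)+S(r,f)=\tfrac56 T(r,f)+S(r,f),
\]
so that $\tfrac16 T(r,f)\le S(r,f)$. Since $f$ is transcendental this is impossible, as $S(r,f)=o(T(r,f))$ as $r\to\infty$ outside a set of finite measure, and the contradiction shows that $f-p$ must have infinitely many zeros. The alternative hypothesis, zeros of multiplicity at least $3$ and poles of multiplicity at least $2$, produces the symmetric estimate $\tfrac13 T(r,f)+\tfrac12 T(r,f)+S(r,f)$ and hence the same contradiction, so both cases are covered.

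I expect the only genuinely delicate point to be the invocation of the second fundamental theorem with the moving target $p$: one must use the version valid for small (here, polynomial) functions rather than the classical three-constant form. Because $p$ is merely a polynomial, so that $N(r,p)=0$ and $m(r,p)=O(\log r)$, this is entirely standard; everything else is bookkeeping with the first fundamental theorem and the multiplicity counts.
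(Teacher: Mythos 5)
Your proposal is correct and is essentially identical to the paper's own proof: both argue by contradiction via the second fundamental theorem for the three small targets $0$, $\infty$, $p$, use the multiplicity hypotheses to bound $\overline{N}(r,1/f)\le\tfrac12 T(r,f)$ and $\overline{N}(r,f)\le\tfrac13 T(r,f)$ (with the roles of $\tfrac12$ and $\tfrac13$ swapped in the alternative case), and reach the same contradiction $T(r,f)\le\tfrac56 T(r,f)+S(r,f)$. Your explicit remark that one needs the small-function version of the second fundamental theorem is a point the paper uses implicitly without comment.
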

\begin{proof}
 Clearly $p(z)$ satisfies $T(r, p(z))=o \{T(r, f(z))\}$. Suppose that $f(z)-p(z)$ has only finitely many zeros. Then  by invoking the second fundamental theorem of Nevanlinna for three small functions $a_1(z)=0, a_2(z)=\infty$ and $a_3(z)=p(z)$, we get
\begin{align*}
  (1+o(1))T(r, f)&\leq \overline{N}(r, f) +\overline{N}\left(r, \frac{1}{f}\right)+\overline{N}\left(r, \frac{1}{f-p}\right)+ S(r, f) \\
   & = \overline{N}(r, f) +\overline{N}\left(r, \frac{1}{f}\right) + S(r, f)\\
   & \leq \frac{N(r, f)}{3} + \frac{N\left(r, \frac{1}{f}\right)}{2} +S(r, f)\\
   & \leq \frac{5}{6}T(r, f) +S(r, f),
\end{align*}
  which is a contradiction.
\end{proof}
\begin{lemma}\label{Main1 function thesis Lemma2}
Let $f$ be a non-constant rational function and let $p\not \equiv0$ be a polynomial of degree at most $m$, where $m$ is a fixed positive integer. Suppose that every zero of $f$ has multiplicity at least $2(m+1)\ (\text{or} \ 3(m+1))$ and every pole of $f$ has multiplicity at least $3(m+1)\ (\text{resp.} \ 2(m+1))$.  Then $f-p$ has at least two distinct zeros.
\end{lemma}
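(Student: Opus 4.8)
The plan is to argue by contradiction: suppose $f-p$ has at most one distinct zero. Write $f=P/Q$ with $P,Q$ coprime polynomials, set $d_1=\deg P$, $d_2=\deg Q$, and let $k_1,k_2$ be the numbers of distinct zeros and poles of $f$; the hypotheses give $d_1\ge 2(m+1)k_1$ and $d_2\ge 3(m+1)k_2$ (with the two multiplicities interchanged in the other alternative). Since $P-pQ$ and $Q$ are coprime, the assumption forces $P-pQ=A(z-z_0)^{\nu}$ for some constant $A\neq0$ and $\nu=\deg(P-pQ)$, so that $f-p=A(z-z_0)^{\nu}/Q$.

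The device that makes the non-constant $p$ tractable is differentiation. Because $\deg p\le m$ we have $p^{(m+1)}\equiv0$, and hence
$$F:=f^{(m+1)}=(f-p)^{(m+1)}.$$
This one identity lets me read off zeros of $F$ from two descriptions of the same function. From $f=P/Q$: at each zero $a_i$ of $f$ (multiplicity $\alpha_i\ge 2(m+1)$) the function $F$ vanishes to order $\alpha_i-(m+1)$, so the finite zeros of $F$, counted with multiplicity, number at least $d_1-(m+1)k_1$; and from $f-p=A(z-z_0)^{\nu}/Q$, if $\nu\ge m+1$ then $F$ vanishes at $z_0$ to order $\nu-(m+1)$ as well (with $z_0$ distinct from the $a_i$ in the generic case). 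For the upper bound I use that at each pole $b_j$ of $f$ the function $F$ has a pole of order $\beta_j+(m+1)$, so $F$ has exactly $d_2+(m+1)k_2$ finite poles counted with multiplicity; since for a rational function the number of finite zeros equals the number of finite poles minus the order of $F$ at infinity, this yields the required upper bound once that order is computed.

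Comparing the two bounds and simplifying gives $d_1\le(m+1)(k_1+k_2)$; inserting $k_1\le d_1/(2(m+1))$ and $k_2\le d_2/(3(m+1))$ converts this into $d_1\le\tfrac{d_1}{2}+\tfrac{d_2}{3}$, i.e. $d_1\le\tfrac23 d_2$. This contradicts the degree relation forced on $P-pQ$: whenever $d_1>\mu+d_2$ (with $\mu=\deg p$) one has $\nu=d_1>d_2$, and whenever $\nu<d_2$ the leading terms of $P$ and $pQ$ must cancel, forcing $d_1=\mu+d_2\ge d_2$; either way $d_1\ge d_2$, which is incompatible with $d_1\le\tfrac23 d_2$. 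Running the identical computation with the multiplicities $3(m+1)$ and $2(m+1)$ interchanged disposes of the second alternative of the lemma.

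The main obstacle, and the reason this cannot simply quote the transcendental Lemma~\ref{Main1 function thesis Lemma1}, is that $p$ is not constant, so ``$f=p$'' is not a fibre of $f$ and the usual Riemann--Hurwitz/defect counting for $f$ never sees the $p$-points; the identity $F=f^{(m+1)}=(f-p)^{(m+1)}$ is exactly what converts the shared-function hypothesis into information about a single rational function whose zeros I can count both ways. The rest is bookkeeping that I would handle case by case: the degenerate situations $k_1=0$ and $k_2=0$ (for instance, if $f$ has no finite zeros then $P$ is constant and $pQ=P-A(z-z_0)^{\nu}$ has only simple roots, since its derivative $-A\nu(z-z_0)^{\nu-1}$ vanishes only at $z_0$, where $P-A(z-z_0)^{\nu}$ does not—incompatible with poles of multiplicity $\ge 3(m+1)$), the coincidence $z_0\in\{a_i\}$, and the precise value of the order of $F$ at infinity across the few ranges of $\nu$ relative to $d_2$, which is where I expect the genuine care to be required.
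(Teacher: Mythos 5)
Your core device is exactly the paper's: since $\deg p\le m$, differentiating $m+1$ times annihilates $p$, and one compares the two resulting descriptions of $f^{(m+1)}$ --- the paper writes these as explicit factorizations \eqref{Ann eq 2r} and \eqref{Ann eq 2arr} and compares numerator degrees, while your zero/pole count via the order at infinity is the same bookkeeping in different notation. Your inequality $d_1\le(m+1)(k_1+k_2)\le\tfrac{d_1}{2}+\tfrac{d_2}{3}$ is the paper's $M\le(m+1)(s+t)\le\tfrac56M$, and your degenerate cases $k_2=0$ and $k_1=0$ correspond to the paper's Case 1 and Case 2a (your little argument for $k_1=0$, via the derivative of $pQ=P-A(z-z_0)^{\nu}$, is correct).

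However, your final contradiction has a genuine gap: the dichotomy ``if $d_1>\mu+d_2$ then $\nu=d_1>d_2$'' or ``if $\nu<d_2$ then cancellation forces $d_1=\mu+d_2\ge d_2$'' is not exhaustive, so ``either way $d_1\ge d_2$'' does not follow. The uncovered case is $d_1\le\mu+d_2$ together with $d_2\le\nu\le\mu+d_2$; in particular when $\nu=\mu+d_2$, the relation $P-pQ=A(z-z_0)^{\nu}$ forces no leading-term cancellation and leaves $d_1$ unconstrained. This is precisely the configuration the paper must work to exclude in its Subcase 2b.2 ($M<N$, i.e.\ $d_1<d_2$), where it first establishes the extra bound $l\le(m+1)(s+t)\le\tfrac56N<N$ on the order of the single zero before the degree comparison $M=N+\deg p$ yields the contradiction; your sketch contains no analogue of that bound on $\nu$, and instead asserts a conclusion that the stated cases do not deliver. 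The gap is fixable inside your own framework: once $d_1\le\tfrac23 d_2<d_2$, the leading term $cz^{d_1-d_2}$ of $f$ at infinity has negative exponent and so survives $m+1$ differentiations, giving $\mathrm{ord}_\infty F=d_2-d_1+m+1$ \emph{exactly}; feeding this exact value (rather than the one-sided estimate implicit in your derivation of $d_1\le(m+1)(k_1+k_2)$) back into your zero count gives $\nu\le(m+1)(k_1+k_2)\le\tfrac23d_2<d_2$, contradicting $\nu\ge d_2$. This is exactly the spot you flagged as ``where genuine care is required,'' but as written you asserted its conclusion instead of carrying it out; note also that the deferred coincidence $z_0\in\{a_i\}$ needs a real argument as well --- the paper handles it by dividing by $(z-z_0)^l$, observing that $z_0$ must then be a zero of $p$, and reducing to its Case 2a.
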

\begin{proof}
For the sake of convenience, we fix the degree of polynomial $p$ as $m$ (deg $p=m$). For deg $p< m$, this proof works verbatim. Now we discuss the following cases.\\

\underline{Case 1.} Suppose $f$ is a non-constant polynomial, then we write
 \begin{equation}\label{Ann eq1p}
    f(z)= A(z-\al_1)^{m_1}\ldots(z-\al_s)^{m_s},
  \end{equation}
  where $A$ is a non-zero constant, $m_i\geq 2(m+1)$ are integers. Then by the fundamental theorem of algebra $f-p$ has zeros. Assume $f-p$ has exactly one zero at $z_0$ and thus we can write
  \begin{equation}\label{Ann eq2p}
    f(z)- p(z)=B(z-z_0)^l,\ l\geq 2(m+1).
  \end{equation}
   Now differentiating \eqref{Ann eq1p} and \eqref{Ann eq2p} $m$ times, we get
   \begin{equation}\label{Ann eq 3p}
     f^{(m)}(z)= (z-\al_1)^{m_1-m}\ldots(z-\al_s)^{m_s-m}g(z),
   \end{equation}
   where $g(z) $ is a polynomial with deg $g(z)\leq m(s-1).$ And
   \begin{equation}\label{Ann eq 4p}
     f^{(m)}(z)- C= B_1(z-z_0)^{l-m},
   \end{equation}
   where $C$ and $B_1$ are non-zero constants.  It is easy to see that $z_0\neq \al_i$ for any $i\in\{1, \ldots s\}$, otherwise $C=0$. Again differentiating \eqref{Ann eq 3p} and \eqref{Ann eq 4p}, we get
   \begin{equation}\label{Ann eq 3ap}
     f^{(m+1)}(z)= (z-\al_1)^{m_1-m-1}\ldots(z-\al_s)^{m_s-m-1}g_1(z),
   \end{equation}
    where $g_1(z) $ is a polynomial with deg $g_1(z)\leq (m+1)(s-1).$ And
   \begin{equation}\label{Ann eq 4ap}
     f^{(m+1)}(z)= B_2(z-z_0)^{l-m-1},
   \end{equation}
   where $B_2$ is a non-zero constant. From \eqref{Ann eq 4ap}, we see that $f^{(m+1)}(\al_i)\neq0,$ for $i=1,\ldots, s$. This shows that the multiplicity of zeros of $f$ is at most $m$, which is a contradiction. \\

   \underline{Case 2.} Suppose $f$ is a non-polynomial rational function, then we set
   \begin{equation}\label{Ann eq 1r}
     f(z)=A\frac{(z-\al_1)^{m_1}\ldots (z-\al_s)^{m_s}}{(z-\beta_1)^{n_1}\ldots (z-\beta_t)^{n_t}},
   \end{equation}
   where $A$ is a non-zero constant, $m_i\geq 2(m+1)\quad (i=1, 2, \ldots, s)$ and $n_j\geq 3(m+1)\quad (j=1, 2, \ldots, t).$\\
   Let us define
   \begin{equation}\label{Ann A}
     \sum_{i=1}^{s}m_i=M\geq2(m+1)s \ \text{and}\ \sum_{j=1}^{t}n_j=N\geq 3(m+1)t.
        \end{equation}
On differentiating   \eqref{Ann eq 1r} $m+1$ times, we get
   \begin{equation}\label{Ann eq 2r}
     f^{(m+1)}(z)=A_1\frac{(z-\al_1)^{m_1-m-1}\ldots (z-\al_s)^{m_s-m-1}h(z)}{(z-\beta_1)^{n_1+m+1}\ldots (z-\beta_t)^{n_t+m+1}},
   \end{equation}where $A_1$ is a non-zero constant and $h(z)$ is a polynomial with deg $h(z)\leq (m+1)(s+t-1)$. \\Now we discuss the following cases.\\

   \underline{Case 2a.} If $f-p$ has no zeros, then we can write
   \begin{equation}\label{Ann eq 1ar}
     f(z)=p(z)+\frac{A_2}{(z-\beta_1)^{n_1}\ldots (z-\beta_t)^{n_t}},
   \end{equation}
   where $A_2$   is a non-zero constant. We notice that \eqref{Ann eq 1r} and \eqref{Ann eq 1ar} together gives $M\geq N$.
   On differentiating  \eqref{Ann eq 1ar} $m+1$ times, we get
   \begin{equation}\label{Ann eq 2ar}
     f^{(m+1)}(z)=\frac{h_1(z)}{(z-\beta_1)^{n_1+m+1}\ldots (z-\beta_t)^{n_t+m+1}},
   \end{equation}where $h_1(z)$ is a polynomial with deg $h_1(z)\leq(m+1)t.$\\
Also, by \eqref{Ann eq 2r} and \eqref{Ann eq 2ar}, we have $M-(m+1)s\leq (m+1)t,$ which gives that $M\leq (m+1)(s+t)$ and  combining this with \eqref{Ann A} we get
    \begin{equation}\notag
      M\leq(m+1)(s+t)\leq \frac{5}{6}M< M,
    \end{equation}which is a contradiction.\\
Here we note that when the multiplicity of poles of $f$ is $\geq 2(m+1)$ and the multiplicity of zeros of $f$ is $\geq 3(m+1)$, the proof is exactly the same.\\

\underline{Case 2b.} If $f-p$ has exactly one zero at $z_0$, then we can write
 \begin{equation}\label{Ann eq 1arr}
     f(z)=p(z)+\frac{C_1(z-z_0)^l}{(z-\beta_1)^{n_1}\ldots (z-\beta_t)^{n_t}},
   \end{equation}
   where $C_1$   is a non-zero constant and $l$ is a positive integer. On differentiating  \eqref{Ann eq 1arr} $m+1$ times, we get
\begin{equation}\label{Ann eq 2arr}
     f^{(m+1)}(z)=\frac{(z-z_0)^{l-m-1}h_2(z)}{(z-\beta_1)^{n_1+m+1}\ldots (z-\beta_t)^{n_t+m+1}},
   \end{equation}where $h_2(z)$ is a polynomial with deg $h_2(z)\leq(m+1)t.$ Note that  \eqref{Ann eq 2arr} also holds in the case $l\leq m$. In this case\begin{equation*}
     f^{(m+1)}(z)=\frac{h_2(z)}{(z-\beta_1)^{n_1+m+1}\ldots (z-\beta_t)^{n_t+m+1}}.
   \end{equation*}\\

   Now, we claim that  $z_0\neq\al_i$ for any $i\in\{1, \ldots, s\}$. Suppose that $z_0=\al_i$ for some $i\in\{1,\ldots,s\}$. If $l\geq m+1$, this would mean that $z_0$ is a zero of order at least $m+1$ of $p$. And if $l\leq m$, then from \eqref{Ann eq 1arr}, $z_0$ is a zero of $p$ with multiplicity at least $ l$. Now from \eqref{Ann eq 1arr} we have
\begin{equation}
     f_1(z)=p_1(z)+\frac{C_2}{(z-\beta_1)^{n_1}\ldots (z-\beta_t)^{n_t}},
   \end{equation}
    where $C_2$ is a constant, $f_1=f/(z-z_0)^l$ and $p_1=p/(z-z_0)^l$. Now proceed as in the Case 2a and get a contradiction. Hence, we have $z_0\neq\al_i$ for any $i\in\{1, \ldots, s\}$.\\

Now, we discuss the following two subcases.\\

\underline{Subcase 2b.1.} If $M\geq N.$ Since $z_0\neq\al_i \ \text{for any}\ i\in\{1,\ldots, s\},$ therefore by \eqref{Ann eq 2r} and \eqref{Ann eq 2arr}, we have $M-(m+1)s\leq (m+1)t.$ Which further implies that $M\leq (m+1)(s+t)$ and  combining this with \eqref{Ann A} we get
    \begin{equation}\notag
      M\leq(m+1)(s+t)\leq \frac{5}{6}M< M,
    \end{equation}which is a contradiction.\\

    \underline{Subcase 2b.2.}  If $M<N$, then by \eqref{Ann eq 2r} and \eqref{Ann eq 2arr}, we deduce that $l-m-1\leq (m+1)(s+t-1)$, which gives that
    \begin{equation}\notag
     l\leq(m+1)(s+t)\leq \frac{5}{6}N<N.
    \end{equation} But by \eqref{Ann eq 1r} and \eqref{Ann eq 1arr}, we have $M\leq \max\{N+\text{deg}\ p, l\},$ with equality if $N+ \text{deg}\ p\neq l$. So $l<N$ leads to the contradiction $M>N$.
      \end{proof}
    Now we are ready to prove Theorem \ref{Main1 function thesis}.
    \begin{proof}[Proof of Theorem \ref{Main1 function thesis}]
     Without loss of generality, we may assume that $D=\{z\in\C: |z|<1\}$. Suppose on the contrary that $\fr$ is not normal at $z_0=0$. Now we have two cases to consider.

     \underline{Case 1.} Suppose $\psi(0)\neq0.$ Then by Zalcman's Lemma there exist
     \begin{enumerate}
\item{ a sequence of complex numbers $z_j \rightarrow z_0=0$, $|z_j|<r<1$},
\item{ a sequence of functions $f_j\in \mathcal F$, }
\item{ a sequence of positive numbers $\rho_j \rightarrow 0$},
\end{enumerate} such that $g_j(\xi)=f_j(z_j+\rho_j\xi)$ converges locally uniformly with respect to the spherical metric to a non-constant meromorphic function $g(\xi)$. It is evident from Hurwitz's theorem that $g$ satisfies the following properties:
\begin{enumerate}
  \item[$($a$)$] every zero of $g$ has multiplicity at least $2(m+1)\ (\text{or}\ 3(m+1)),$
  \item[$($b$)$] every pole of $g$ has multiplicity at least $3(m+1)\ (\text{resp.}\ 2(m+1)).$
\end{enumerate}
  Also on every compact subsets of $\C$, not containing poles of $g$,  we get that
\begin{align}
f_j(z_j+\rho_j\xi)-\psi(z_j+\rho_j\xi)&= g_j(\xi)-\psi(z_j+\rho_j\xi)\notag\\
&\rightarrow g(\xi)-\psi(0).\label{Ann eq proof 1}
\end{align}
Clearly, $g(\xi)-\psi(0)\not\equiv0$. Therefore by Lemma \ref{Main1 function thesis Lemma1} and Lemma \ref{Main1 function thesis Lemma2}, we know that $g(\xi)-\psi(0)$  has at least two distinct zeros. Let $w_1$ and $w_2$ be two distinct zeros of $g(\xi)-\psi(0)$. We can find two disjoint neighborhoods $N_{\delta_1}(w_1)=\{z: |z-w_1|<\delta_1\}$ and $N_{\delta_2}(w_2)=\{z: |z-w_2|<\delta_2\}$ such that $N_{\delta_1}(w_1)\cup N_{\delta_2}(w_2)$  contains no  zero of $g(\xi)-\psi(0)$ other than $w_1$ and $w_2$. By Hurwitz's theorem, there exist two sequences  $\{w_{1_j}\}\subset N_{\delta_1}(w_1), \{w_{2_j}\}\subset N_{\delta_2}(w_2)$ converging to $w_{1},   w_{2}$ respectively and  for sufficiently large $j$, we have
\begin{align*}
  f_j(z_j+\rho_jw_{1_j})-\psi(z_j+\rho_jw_{1_j}) &=0,\\
  f_j(z_j+\rho_jw_{2_j})-\psi(z_j+\rho_jw_{2_j}) &=0.
  \end{align*}
  Since each pair $f_a, f_b$ of $\fr$ shares $\psi$ IM in $D$, therefore for any positive integer $m$ we have
  \begin{align*}
  f_m(z_j+\rho_jw_{1_j})-\psi(z_j+\rho_jw_{1_j}) &=0,\\
  f_m(z_j+\rho_jw_{2_j})-\psi(z_j+\rho_jw_{2_j}) &=0.
  \end{align*}
  Fixing $m$ and taking $j\rightarrow \infty$, we see that $z_j+\rho_jw_{1_j}\rightarrow 0$, $z_j+\rho_jw_{2_j}\rightarrow 0$ and $f_m(0)-\psi(0)=0$. Since the zero set is discrete, for large values of $j$ we have
  \begin{equation}\notag
    z_j+\rho_jw_{1_j}=0=z_j+\rho_jw_{2_j},
  \end{equation} hence
  \begin{equation}\notag
    w_{1_j}=-\frac{z_j}{\rho_j}=w_{2_j}.
  \end{equation}
  This contradicts the fact that $N_{\delta_1}(w_1)\cap N_{\delta_2}(w_2)=\emptyset.$  \\

  \underline{Case 2.} Let $\psi(0)=0.$ We can write $\psi(z)=z^t \phi(z),$ where $t (\leq m)$ is a positive integer and $\phi(z)$ is a holomorphic function in $D$ such that $\phi(0)\neq0$. Now we consider the following subcases:\\

\underline{Subcase 2.1.} If $f(0)\neq \psi(0)$, for some $f\in \fr$. Then, there exists $r>0$ such that $f(z)\neq 0$ and $f(z)\neq \psi(z)$, for all $z\in N_r(0)$ and for all $f\in\fr$. Then, normality is confirmed by Theorem D$'$.\\

\underline{Subcase 2.2.} If $f(0)=\psi(0)$, for some $f\in \fr$. Then, there exists $r>0$ such that
$f(z)\neq \psi(z)$, for all $z\in N_r'(0)=\{z: 0<|z|<r\}$ and for all $f\in\fr$. Now consider the family $\mathcal{G}=\left\{f(z)/z^t: f\in\fr\right\}$ and the function $\phi(z)$. On $N_r(0)$, $\mathcal{G}$ satisfies Subcase 2.1, therefore $\mathcal{G}$ is normal in $N_r(0)$.
 Now, we show that $\fr$ is normal in $N_r(0)$. Clearly, $\fr$ is normal in $N_r'(0)$ and  $g(0)=0$, for all $g\in\mathcal{G}$. So there exists $0<\delta $($<r$) such that if $g\in \mathcal G$, $|g(z)|\leq 1$, for $z\in N_{\delta}(0).$ On the boundary of $N_{\delta}(0)$,  $|f(z)|=|z|^t|g(z)|\leq \delta^t$. Thus, by the maximum principle $|f(z)|\leq \delta^t$ on $N_{\delta}(0),$ for all $f\in\fr.$ So $\fr$ is normal on $N_{\delta}(0)$ and hence on $N_r(0).$
 \end{proof}

Now we give the  proof of Theorem \ref{M T}.
\begin{proof}[Proof of Theorem \ref{M T}]
  Since normality is a local property, it is sufficient to show that $\fr$ is normal at each point  of $D$. Now we consider the following cases to check the normality at an arbitrarily chosen  point $z_0\in D$.\\

  {{\underline{Case 1.}}} If $f(z_0)\neq \psi(z_0), $ for some $f\in\fr$. Then, there exists $r>0$ such that $f(z)\neq \psi(z)$, for all $z\in N_r(z_0)$ and for all $f\in\fr$. Then $\fr$ is normal at $z_0$, by Theorem D$'$.\\

  {{\underline{Case 2a.}}} If $f(z_0)=\psi(z_0)\neq 0, \infty,$ for some $f\in\fr$. Then, there exists $r>0$ such that $f(z)\neq \psi(z)$ and $f(z)/\psi(z)\neq 0, \infty$, in $N_r'(z_0).$ Now consider the family $\fr_1=\{f(z)/\psi(z): f\in\fr\}$. By Corollary \ref{Cor}, $\fr_1$ is normal at $z_0$. Since each $f/\psi \in \fr_1$ is holomorphic and $\fr_1$ is normal at $z_0$, we get $\fr$ is normal at $z_0$.\\

  {{\underline{Case 2b.}}} If $f(z_0)=\psi(z_0)=0$, for some $f\in\fr$. Then, there exists $r>0$ such that $\psi(z)\neq 0, \infty$ and $f(z)\neq \psi(z)$ in $N_r'(z_0)$.  Let $m$ be the multiplicity of the zero of $\psi$ at $z=z_0$. Consider the family $\mathcal{G}=\left\{f/(z-z_0)^m: f\in\fr\right\}$ and the function $\psi(z)/(z-z_0)^m$. On $N_r(z_0)$, $\mathcal{G}$ satisfies case 1. Therefore $\mathcal{G}$ is normal in $N_r(z_0)$.
 Now, we show that $\fr$ is normal in $N_r(z_0)$. Clearly, $\fr$ is normal in $N_r'(z_0).$ And by condition ($3$) of the theorem, $g(z_0)=0$, for all $g\in\mathcal{G}$. So there exists $0<\delta $($<r$) such that if $g\in \mathcal G$, $|g(z)|\leq 1$, for $z\in N_{\delta}(z_0).$ On the boundary of $N_{\delta}(z_0)$,  $|f(z)|=|z-z_0|^m|g(z)|\leq \delta^m$. Thus, by the maximum principle $|f(z)|\leq \delta^m$ on $N_{\delta}(z_0),$ for all $f\in\fr.$ So $\fr$ is normal on $N_{\delta}(z_0)$ and hence on $N_r(z_0).$\\

  {{\underline{Case 2c.}}} If $f(z_0)=\psi(z_0)=\infty$, for some $f\in\fr$.  Then, there exists $r>0$ such that $f(z)\neq \psi(z)$ in $N_r' (z_0).$ Let $k$ be the multiplicity of the pole of $\psi(z)$ at $z=z_0$. Consider the family $\mathcal H = \{(z-z_0)^kf: f\in\fr\}$ and the function $(z-z_0)^k\psi(z)$. Then $\mathcal{H}$ satisfies case 1, so $\mathcal H$ is normal in $N_r(z_0)$.
 Now, we prove that $\fr$ is normal at $N_r(z_0)$. Clearly, $\fr$ is normal in $N_r'(z_0).$  Also, by condition ($3$) of the theorem, $h(z_0)=\infty,$ for all $h\in \mathcal{H}.$ So there exists $0<\delta$($<r$) such that  $|h(z)|\geq 1$, for all $h\in \mathcal{H}$ and $z\in N_{\delta}(z_0).$ It follows that, $f(z)\neq 0$ in $N_{\delta}(z_0)$, for all $f\in\fr$.  Since $\fr$ is normal in $N_r' (z_0)$, then the family $1/\fr=\left\{1/f: f\in\fr \right\}$ is holomorphic in $N_{\delta}(z_0)$ and normal  in $N'_{\delta}(z_0),$ but it is not normal at $z=z_0$. Thus, there exists  a sequence $\{1/f_n\}\subset 1/\fr$ which converges locally uniformly in $N'_{\delta}(z_0),$ but no subsequence of $\{1/f_n\} $ converges uniformly in a  neighborhood of $z_0$. The maximum modulus principle implies that $1/f_n\rightarrow \infty$ on compact subsets in $N'_{\delta}(z_0).$ Hence, $f_n\rightarrow 0$ uniformly on compact subsets of $N'_{\delta}(z_0)$ and this shows that $h_n\rightarrow 0$ uniformly on compact subsets of $N'_{\delta}(z_0).$ Which is a contradiction to the fact that $|h_n(z)|\geq 1$ in $N_{\delta}(z_0)$.
\end{proof}

{\bf{Acknowledgement.}} We would like to thank the referee for careful reading of our paper and making suggestions for changes which has enhanced the presentability of the paper. Furthermore, we would also thank him/her for bringing to  our attention  the work done by  Xu \cite{Xu 05}.
    
    \end{document}